\newtheorem{theorem}{Theorem}[section]
\newtheorem{lemma}[theorem]{Lemma}
\def\arrowdown#1#2{\Big\downarrow \rlap{$\vcenter{\hbox{$\scriptstyle#2$}}$}
{\hbox to -10pt{\hss{$\vcenter{\hbox{$\scriptstyle#1$}}$}}}}
\def\arrowup#1#2{\Big\uparrow \rlap{$\vcenter{\hbox{$\scriptstyle#2$}}$}
{\hbox to -10pt{\hss{$\vcenter{\hbox{$\scriptstyle#1$}}$}}}}
\begin{document}
\title [Infinite easier Waring constants for commutative rings]{Infinite easier Waring constants for commutative rings}

\author{Ted Chinburg}
\address{T.C.: Department of Mathematics\\University of
Pennsylvania\\Philadelphia, PA
19104-6395}
\email{ted@math.upenn.edu}

\thanks{The author was supported in part by NSF Grant DMS-0801030}

\date{\today}

\begin{abstract}
Suppose $n \ge 2$.  We show that there is no integer $v \ge 1$ such that for all commutative rings $R$ with identity,
every element of the subring $J(2^n,R)$ of $R$ generated by $2^n$-th powers can be written in the form 
$\pm f_1^{2^n} \pm \cdots  \pm f_v^{2^n}$ for some $f_1,\ldots,f_v \in R$ and some choice of signs.
\end{abstract}

\maketitle

\maketitle



\section{Introduction}
\label{s:intro}
\setcounter{equation}{0}

The object of this paper is to prove a result about easier Waring constants for commutative
rings which was announced over thirty years ago in \cite{C}.  
This result grew out of research of the author with Mel Henriksen in \cite{CH}.  It
is a pleasure to remember Mel's generosity and the excitement of working 
with him.  This paper is dedicated to Mel.  

Let $R$ be a commutative ring with identity, and suppose $k$ is a positive integer.
Define $J(k,R)$ to be the subring of $R$ generated by all $k^{th}$ powers.
 If there is an integer $v$ such that every element $f$ of $J(k, R)$ is of 
the form $$f = \sum_{i =1}^v \pm f_i^k$$
for some $f_1,\ldots,f_v \in R$ and some choice of signs, let $v(k, R)$ denote the smallest such $v$.  If no such $v$ exists, put 
$v(k, R) = \infty$. Let $V(k)$ be the sup, over all $R$, of $v(k, R)$.   
Our main result is:

\begin{theorem}
\label{thm:main} For $n \ge 2$ one has 
$$V(2^n) = v(2^n,R_\infty) = \infty$$ when 
$R_\infty = \mathbb{Z}[\{x_i\}_{i=1}^\infty]$ is the ring of polynomials with integer coefficients in
countably many indeterminates.
\end{theorem}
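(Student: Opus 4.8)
The plan is to prove the stronger statement $v(2^n,R_\infty)=\infty$; since $V(2^n)\ge v(2^n,R_\infty)$ this yields the theorem. First I would record that $J(k,R)$ is exactly the additive subgroup generated by the $k$-th powers, because a product of $2^n$-th powers is again a $2^n$-th power; thus membership is never the issue, and the whole content is that the number of signed $2^n$-th powers needed is unbounded. So it suffices to produce, for each $v$, a single element $T\in J(2^n,R_\infty)$ admitting no representation $T=\sum_{i=1}^v \epsilon_i f_i^{2^n}$.

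The key device is a mod-$4$ invariant that exists precisely because $n\ge 2$. Reduction mod $2$ sends a signed sum of $2^n$-th powers to $\big(\sum_i \bar f_i\big)^{2^n}$, since Frobenius is additive in characteristic $2$; hence every $g\in J(2^n,R_\infty)$ satisfies $\bar g=\bar h^{2^n}$ in $\mathbb{F}_2[\mathbf{x}]$ for a unique $\bar h$. Choosing any lift $\tilde h\in R_\infty$ of $\bar h$, I set $w(g)=\big((g-\tilde h^{2^n})/2\big)\bmod 2\in\mathbb{F}_2[\mathbf{x}]$. Because $4\mid 2^n$ when $n\ge 2$, one has $(\tilde h+2k)^{2^n}\equiv \tilde h^{2^n}\pmod 4$, so $w(g)$ is independent of the lift and is a genuine invariant of $g$; for $n=1$ this fails, which is exactly why squares are excluded. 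A short binomial computation, using that $\binom{2^n}{2^{n-1}}\equiv 2\pmod 4$ while the other interior binomials vanish mod $4$, then gives the crucial addition formula: for $g=\sum_{i=1}^v\epsilon_i f_i^{2^n}$,
\[ w(g)=\sum_{\epsilon_i=-1}\bar f_i^{\,2^n}+\sum_{1\le i<j\le v}(\bar f_i\bar f_j)^{2^{n-1}} . \]
Applying the inverse of the $(n-1)$-fold Frobenius converts this into $\tilde w(g)=c^2+e_2(\bar f_1,\dots,\bar f_v)$, where $e_2$ is the second elementary symmetric function and $c=\sum_{\epsilon_i=-1}\bar f_i$.

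With this in hand the problem becomes one about $\mathbb{F}_2[\mathbf{x}]$: bound below the number of terms needed to write a prescribed $\psi=\tilde w(T)$ as $c^2+e_2(b_1,\dots,b_v)$. The decisive point is that in characteristic $2$ every square has vanishing (Hasse) Hessian, so the matrix of second Hasse derivatives of $\psi$ is insensitive to the $c^2$ term; when the $b_i$ are linear one finds $\mathrm{Hess}(\psi)=(\nabla e_1)(\nabla e_1)^{\!\top}+\sum_i(\nabla b_i)(\nabla b_i)^{\!\top}$, a symmetric matrix of rank at most $v+1$ over $\mathbb{F}_2(\mathbf{x})$. I would therefore take as target an element $T\in J(2^n,R_\infty)$ with $w(T)=\psi^{2^{n-1}}$ for a nondegenerate form such as $\psi=\sum_{l=1}^{N}x_{2l-1}x_{2l}$; such a $T$ exists because the cross terms in the formula above let $w$ realize any $e_2$-expression, and for this $\psi$ the Hessian is the standard symplectic matrix of rank $2N$. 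The rank bound then forces $v\ge 2N-1$, and letting $N\to\infty$ finishes the argument.

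The main obstacle is exactly the step I glossed: a representation of $T$ is free to use $f_i$ of high degree, and then the second Hasse derivatives of the nonlinear parts contribute extra terms $\sum_i(e_1+b_i)\,\mathrm{Hess}(b_i)$ to $\mathrm{Hess}(\psi)$ that are not of bounded rank, so the naive inequality $\mathrm{rank}\,\mathrm{Hess}(\psi)\le v+1$ can fail. Controlling this ``interference'' from higher-degree bases is the real work: one must exploit that $T$ simultaneously pins down all the higher-order Hasse derivatives (equivalently, all coefficients mod $4$) of $\psi$, so that spending a term on a genuinely nonlinear $f_i$ in order to adjust $\mathrm{Hess}(\psi)$ necessarily disturbs other invariants that then cost further terms. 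I expect to handle this either by an inductive leading-form reduction that replaces the $f_i$ by their linear parts at the cost of a bounded factor, or by transferring the estimate into a finite truncated test ring $R_\infty/\mathfrak{m}^{D+1}$ where a direct dimension count over $\mathbb{F}_2$ is available; making one of these reductions rigorous is where the difficulty of the proof is concentrated.
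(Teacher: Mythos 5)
Your reduction to $\lim_{m\to\infty}v(2^n,R_m)=\infty$, the remark that $J(2^n,R)$ is additively generated by $2^n$-th powers, and the mod-$4$ computation based on $\binom{2^n}{2^{n-1}}\equiv 2\ (\mathrm{mod}\ 4)$ with the other interior multinomial coefficients divisible by $4$ are all correct, and your addition formula $\tilde w(g)=c^2+e_2(\bar f_1,\dots,\bar f_v)$ is in the same circle of ideas as the paper's Lemma 2.1. But the proposal is not a proof, for exactly the reason you flag in your final paragraph: the entire lower bound rests on $\mathrm{rank}\,\mathrm{Hess}(\tilde w(T))\le v+1$, which you establish only when the $\bar f_i$ are linear. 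A representation of $T$ may use $f_i$ of arbitrary degree, and then $\mathrm{Hess}(e_2(\bar f_1,\dots,\bar f_v))$ acquires the terms $\sum_i(e_1+\bar f_i)\,\mathrm{Hess}(\bar f_i)$, whose rank is uncontrolled; equivalently, the $x_ix_j$-coefficient of $c^2+e_2(\bar f_1,\dots,\bar f_v)$ involves the quadratic coefficients of the $\bar f_i$, not just their linear parts. Neither of the two repairs you gesture at is carried out, and neither is routine (truncating at degree $D$ does not prevent the $\bar f_i$ from having large-rank quadratic parts, and no mechanism is exhibited by which using a nonlinear $f_i$ ``disturbs other invariants''). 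The decisive step is therefore missing. Two smaller inaccuracies: the well-definedness of $w$ needs $2^{n+1}\equiv 0\ (\mathrm{mod}\ 4)$, i.e.\ $n\ge 1$, not ``$4\mid 2^n$'', so this is not where $n=1$ is excluded; and note that for $n\ge 2$ the $x_ix_j$-coefficient of $w(g)$ is identically $0$, so all of your degree-$2$ information comes from the coefficients of $(x_ix_j)^{2^{n-1}}$.

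The paper closes this gap not by a rank argument but by engineering the invariant so that the interference from the degree-$\ge 2$ parts of $f$ cancels identically, at the price of using congruence data finer than mod $4$: Lemma 2.1 shows
\[
\frac{c_{f^{2^n}}(x_ix_j)}{2^n}+\frac{c_{f^{2^n}}(x_i^{2^{n-1}}x_j^{2^{n-1}})}{2}\equiv (c_f(1)+1)\,c_f(x_i)\,c_f(x_j)\quad \mathrm{mod}\ 2,
\]
where the first summand reads off $c_{f^{2^n}}(x_ix_j)$ modulo $2^{n+1}$ (strictly more than your mod-$4$ invariant retains once $n\ge 2$) and is exactly what is needed to cancel the $c_f(x_ix_j)$ contribution appearing in the second summand. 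This turns each $\pi_{i,j}$ into a genuine homomorphism $J(2^n,R_m)\to\mathbb{Z}/2$ whose value on every generator $f^{2^n}$ depends only on $c_f(1)$ and the linear coefficients of $f$ mod $2$; surjectivity of $\pi=\prod_{i<j}\pi_{i,j}$ onto $(\mathbb{Z}/2)^{\binom{m}{2}}$ is checked on $f=x_i+x_j$, and a bare cardinality count $2^{vm}\ge 2^{m(m-1)/2}$ gives $v(2^n,R_m)\ge (m-1)/2$ with no Hessian or rank considerations at all. If you wish to pursue your route, you should replace the Hessian bound by this kind of exact cancellation, which will require enlarging your invariant from $g$ mod $4$ to $g$ mod $2^{n+1}$.
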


To our knowledge,
this provides the first example of an integer $k$ for which  $V(k)$ is infinite.  

Concerning $k$
for which $V(k)$ is finite, Joly proved in \cite[Thm. 7.9]{J} that $V(2) = 3$.  
In \cite[Thm. 1]{C} it was shown that if $k$ is a prime which is not of the form $(p^{bc} - 1)/(p^c -1)$ for some prime $p$ and integers
$b \ge 2$ and $c \ge 1$, then $V(k)$ is finite.   This implies that $V(k)$ is finite for almost all primes $k$.
As of this writing, we do not know of further integers $k$ for which  $V(k)$ has been shown to
be finite.  The smallest integer $k > 2$ for which the results of \cite{C} show $V(k)$ to be finite  is $k = 11$. 

Striking quantitative results concerning upper bounds on $V(k)$ for various $k$, and on $v(k,R)$
for various $R$, have been proved by a number of authors including Car, Cherly, Gallardo, Heath-Brown, Newman, Slater, Vaserstein and others.
See  \cite{Car,Ch, GV, GH, NM,VW1, VW2, VC, VR} and their references.  

For $m \ge 1$ let $R_m = \mathbb{Z}[x_1,\ldots,x_m]$ be the ring of polynomials with integer coefficients in
$m$ commuting indeterminates.  By \cite[Theorems 3 and 4]{C},   $v(k,R_m)$ is finite for all $k$ and $m$.  By
 \cite[Prop. 7.12]{J},
 $$V(k) = \mathrm{sup}_{m \ge 1} v(k,R_m) = v(k,R_\infty).$$
 We show Theorem \ref{thm:main} by proving $\lim_{m \to \infty} v(k,R_m) = \infty$
when $k = 2^n > 2$.   We now summarize the strategy to be used in \S \ref{s:proofs} for 
bounding $v(k,R_m)$ from below in order to clarify how this approach might be applied for
other values of $k$. 

The strategy is to construct a surjection 
$$\pi:J(k,R_m) \to A$$
to an abelian group $A$ with the following property.  For $v \ge 1$, let $J(k,R_m)_v$ be the subset of
elements of $J(k,R_m)$ of the form $\sum_{i = 1}^v \pm f_i^k$ for some $f_i \in R_m$ and some 
choice of signs.  One would like to produce a $\pi$ such that $\pi(J(k,R_m)_v)$ has order
less than $A$ unless $v$ is at least some bound  which goes to infinity with $m$.

For $k = 2^n > 2$, the $\pi$ we construct in \S \ref{s:proofs} results from combining congruence
classes of the coefficients of high degree monomials which appear in the expansions of elements of $J(2^n,R_m)$.
The group $A$ is a vector space
over $\mathbb{Z}/2$ of dimension $m(m-1)/2$.  The $\pi$ we consider has the property that for $f_i \in R_m$, the value of
$\pi(f_i^{2^n})$ is $0$ if $f_i$ has odd constant term, and otherwise $\pi(f_i^{2^n})$ depends only on the coefficients mod $2$ of the homogeneous
degree $1$ part of $f_i$.  This means that the value of $\pi(\sum_{i = 1}^v \pm f_i^{2^n})$ depends
only on at most $vm$ elements of $\mathbb{Z}/2$, so that $$\# \pi(J(2^n,R_m)_v) \le 2^{vm}.$$  If $v = v(2^n,R_m)$,
so that $J(2^n,R_m)_v = J(2^n,R_m)$,
we must therefore have 
\begin{equation}
\label{eq:lowerbound}
vm = v(2^n,R_m) \cdot m \ge \mathrm{dim}_{\mathbb{Z}/2}(A) = m(m-1)/2
\end{equation}
since $\pi$ is surjective. 
 This produces the
lower bound
\begin{equation}
\label{eq:lower}
v(2^n,R_m) \ge (m-1)/2
\end{equation}
and implies Theorem \ref{thm:main}.

One can surely improve (\ref{eq:lower}), but we will not attempt to  optimize the above method in this paper.  
A systematic approach would be to 
consider which combinations of congruence classes of higher degree monomial coefficients of elements $f^{k}$ of $J(k,R_m)$
can be shown to depend only on the congruence classes of lower degree monomial coefficients of $f \in R_m$.   These
combinations should be chosen to be independent of one another, in the sense that they together
produce a surjection from $J(k,R_m)$ to a large abelian group $A$.


\section{Proof of Theorem \ref{thm:main}}
\label{s:proofs}

Let $m \ge 1$ be fixed.  We will write polynomials
in $R_m = \mathbb{Z}[x_1,\ldots,x_m]$ in the form 
\begin{equation}
\label{eq:fform}
f = \sum_{\alpha} c_{f}(x^\alpha) x^\alpha
\end{equation}
 where 
 $$x^\alpha = \prod_{i = 1}^m x_i^{\alpha_i}$$
 is the monomial associated to a vector $\alpha = (\alpha_1,\ldots,\alpha_m)$
of non-negative integers 
and the integers $c_f(x^\alpha)$ are $0$ for almost all $\alpha$. 

\begin{lemma}
\label{lem:calc}  Suppose  $n \ge 2$ and 
$1 \le i < j \le m$. Then $c_{f^{2^n}}(x_i x_j)/2^n$
and $c_{f^{2^n}}(x_i^{2^{n-1}} x_j^{2^{n-1}})/2$ are integers.  One has 
\begin{equation}
\label{eq:congruence}
\frac{c_{f^{2^n}}(x_i x_j)}{2^n} + \frac{c_{f^{2^n}}(x_i^{2^{n-1}} x_j^{2^{n-1}})}{2} \equiv (c_f(1) + 1) c_f(x_i) c_f(x_j) \quad \mathrm{mod}\quad 2 \mathbb{Z}
\end{equation}
\end{lemma}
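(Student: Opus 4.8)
The plan is to compute both coefficients directly from the multinomial expansion of $f^{2^n}$, extracting just enough $2$-adic information in each case. Write $c_0 = c_f(1)$, $a_i = c_f(x_i)$ and $b_{ij} = c_f(x_i x_j)$. For the low-degree coefficient $c_{f^{2^n}}(x_i x_j)$ the computation is exact: since $x_i x_j$ has degree $2$, in any product of $2^n$ monomials contributing to it all but one or two factors must supply the constant term, so only two shapes occur. One factor supplying $x_i x_j$ and the rest supplying $c_0$ gives $2^n c_0^{2^n-1} b_{ij}$, while one factor supplying $x_i$, another supplying $x_j$, and the rest supplying $c_0$ gives $2^n(2^n-1) c_0^{2^n-2} a_i a_j$. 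Hence
\[
\frac{c_{f^{2^n}}(x_i x_j)}{2^n} = c_0^{2^n-1} b_{ij} + (2^n-1) c_0^{2^n-2} a_i a_j,
\]
which is an integer and, reducing mod $2$ (using $c_0^e \equiv c_0$ for $e \ge 1$), is congruent to $c_0 b_{ij} + c_0 a_i a_j$.

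For the high-degree coefficient I would first establish integrality by Frobenius: modulo $2$ one has $f^{2^n} \equiv \sum_\alpha c_f(x^\alpha) x^{2^n \alpha}$, and since the exponent vector of $x_i^{2^{n-1}} x_j^{2^{n-1}}$ is not divisible by $2^n$, the coefficient $c_{f^{2^n}}(x_i^{2^{n-1}} x_j^{2^{n-1}})$ is even. To get its value mod $4$ I would write $f^{2^n} = F^2$ with $F = f^{2^{n-1}}$ and expand the coefficient of $x^\delta$, where $\delta$ is the exponent of $x_i^{2^{n-1}} x_j^{2^{n-1}}$, as the diagonal square $c_F(x_i^{2^{n-2}} x_j^{2^{n-2}})^2$ plus twice a sum of products $c_F(x^\beta) c_F(x^\gamma)$ over unordered pairs with $\beta + \gamma = \delta$ and $\beta \neq \gamma$. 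The same Frobenius reduction applied to $F = f^{2^{n-1}}$ shows $c_F(x_i^{2^{n-2}} x_j^{2^{n-2}})$ is even (its exponent is not divisible by $2^{n-1}$), so the square term vanishes mod $4$; here I use $n \ge 2$ so that $2^{n-2}$ is a genuine non-negative exponent.

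It then remains to evaluate the cross-term sum modulo $2$. Since $c_F(x^\beta)$ is odd only when every component of $\beta$ is divisible by $2^{n-1}$, and then $c_F(x^\beta) \equiv c_f(x^{\beta/2^{n-1}})$, the only surviving pairs $\{\beta,\gamma\}$ with $\beta + \gamma = \delta$ are $\{1,\, x_i^{2^{n-1}} x_j^{2^{n-1}}\}$ and $\{x_i^{2^{n-1}},\, x_j^{2^{n-1}}\}$. These contribute $c_0 b_{ij} + a_i a_j$, giving
\[
\frac{c_{f^{2^n}}(x_i^{2^{n-1}} x_j^{2^{n-1}})}{2} \equiv c_0 b_{ij} + a_i a_j \pmod{2}.
\]
Adding this to the mod-$2$ reduction of the low-degree coefficient, the two $c_0 b_{ij}$ terms cancel and I am left with $c_0 a_i a_j + a_i a_j = (c_0 + 1) a_i a_j = (c_f(1) + 1) c_f(x_i) c_f(x_j)$, which is the claimed congruence. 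The main obstacle is the mod-$4$ analysis of the high-degree coefficient: one must argue that the diagonal square drops out and then correctly enumerate, via the Frobenius reduction, exactly which pairs $(\beta,\gamma)$ contribute to the cross term. The low-degree computation and the final cancellation are routine once this bookkeeping is in place.
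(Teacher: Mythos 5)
Your proposal is correct and follows essentially the same route as the paper: an exact multinomial computation of $c_{f^{2^n}}(x_i x_j)$, and a mod-$4$ analysis of $c_{f^{2^n}}(x_i^{2^{n-1}} x_j^{2^{n-1}})$ via writing $f^{2^n} = (f^{2^{n-1}})^2$ and using the Frobenius congruence $f^{2^{n-1}} \equiv \sum_\alpha c_f(x^\alpha)^{2^{n-1}} (x^\alpha)^{2^{n-1}} \bmod 2$ to identify the surviving cross-terms. Your explicit verification that the diagonal term $c_F(x_i^{2^{n-2}} x_j^{2^{n-2}})^2$ vanishes mod $4$ is a point the paper handles implicitly (no $(x^\alpha)^{2^{n-1}}$ squares to $x_i^{2^{n-1}} x_j^{2^{n-1}}$), but the substance is the same.
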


\begin{proof}  We first compute the coefficient $c_{f^{2^n}}(x_i x_j)$ of $x_i x_j$ in $f^{2^n}$.  Write
$$f = c_f(1) + t$$
where $t$ has constant term $0$. Then
\begin{equation}
\label{eq:f4}
f^{2^n} = c_f(1)^{2^n} + 2^n c_f(1)^{2^n-1} t + \frac{2^n (2^n -1)}{2} c_f(1)^{2^n - 2} t^2  + z
\end{equation}
where all the terms of $z \in R_m$ have degree larger than $2$.  
Here 
$$t \equiv \sum_{\ell = 1}^m c_f(x_\ell) x_\ell \quad \mathrm{mod}\quad \mathrm{terms\ of \ degree \ } \ge 2.$$
Because $i < j$, the coefficient of $x_i x_j$ in $t^2$ is $2c_f(x_i)c_f(x_j)$.  Putting
this into (\ref{eq:f4}), and noting that the coefficient of $x_i x_j$ in $t$ is $c_f(x_i x_j)$ by definition,
we conclude that 
\begin{equation}
\label{eq:xixj}
c_{f^{2^n}}(x_i x_j) = 2^n c_f(1)^{2^n -1} c_f(x_i x_j) + 2^n(2^n -1) c_f(1)^{2^n -2} c_f(x_i)c_f(x_j).
\end{equation}
Thus $2^n$ divides $c_{f^{2^n}}(x_i x_j)$. Because $n > 1$ and $a^s \equiv a$ mod $2$
for all $a \in \mathbb{Z}$ and $s \ge 1$, we find
\begin{equation}
\label{eq:nicer}
\frac{c_{f^{2^n}}(x_i x_j)}{2^n} \equiv c_f(1) (c_f(x_i x_j) + c_f(x_i) c_f(x_j)) \quad \mathrm{mod}\quad 2 \mathbb{Z}.
\end{equation}

We now consider the coefficient $c_{f^{2^n}}(x_i^{2^{n-1}} x_j^{2^{n-1}})$ of $x_i^{2^{n-1}} x_j^{2^{n-1}}$ in $f^{2^n}$.  
Write
\begin{equation}
\label{eq:ftwo}
f^{2^{n-1}} = \left (\sum_{\alpha} c_f(\alpha)^{2^{n-1}} (x^\alpha)^{2^{n-1}} \right ) + 2g   
\end{equation}
for some polynomial $g \in R_m$.  Then 
\begin{equation}
\label{eq:fagain}
f^{2^n} = (f^{2^{n-1}})^2 \equiv \left ( \sum_{\alpha} c_f(\alpha)^{2^{n-1}} (x^\alpha)^{2^{n-1}} \right )^2 \quad \mathrm{mod}\quad 4R_m.
\end{equation}
When one expands the square on the right side of (\ref{eq:fagain}), the coefficient of $x_i^{2^{n-1}} x_j^{2^{n-1}}$
is 
$$2c_f(1)^{2^{n-1}} c_f(x_i x_j)^{2^{n-1}} + 2 c_f(x_i)^{2^{n-1}} c_f(x_j)^{2^{n-1}}.$$
Because of the congruence (\ref{eq:fagain}) and the fact that $n > 1$, we conclude that $c_{f^{2^n}}(x_i^{2^{n-1}} x_j^{2^{n-1}})$ is divisible
by $2$, and 
\begin{eqnarray}
\label{eq:well}
\frac{c_{f^{2^n}}(x_i^{2^{n-1}} x_j^{2^{n-1}})}{2} &\equiv& c_f(1)^{2^{n-1}} c_f(x_i x_j)^{2^{n-1}} + c_f(x_i)^{2^{n-1}} c_f(x_j)^{2^{n-1}}  \quad \mathrm{mod} \quad 2\mathbb{Z}\nonumber \\
&\equiv& c_f(1) c_f(x_i x_j) + c_f(x_i) c_f(x_j)  \quad \mathrm{mod} \quad 2\mathbb{Z}
\end{eqnarray}
Adding (\ref{eq:nicer}) and (\ref{eq:well}) gives (\ref{eq:congruence}) and completes the proof.
 \end{proof}

\medbreak
\noindent {\bf Proof of Theorem \ref{thm:main}}
\medbreak
Fix $n > 1$ and $m \ge 2$ and suppose $1 \le i < j \le m$. By Lemma \ref{lem:calc}, there
is a unique homomorphism
\begin{equation}
\label{eq:pidef}
\pi_{i,j}:J(2^n,R_m) \to \mathbb{Z}/2
\end{equation}
which for $f \in R_m$ has the property that
$$\pi_{i,j}(f^{2^n}) = \left ( \frac{c_{f^{2^n}}(x_i x_j)}{2^n} + \frac{c_{f^{2^n}}(x_i^{2^{n-1}} x_j^{2^{n-1}})}{2} \right ) \quad \mathrm{mod}\quad 2$$
with the notation of Lemma \ref{lem:calc}.  The product of these homomorphisms over all pairs $(i,j)$
of integers such that $1 \le i < j \le m$ gives a homomorphism
\begin{equation}
\label{eq:pibigdef}
\pi:J(2^n,R_m) \to (\mathbb{Z}/2)^{\left ( m\atop 2 \right ) } = A
\end{equation}
Suppose we fix a pair $(i',j')$ of integers such that $1 \le i' < j' \le m$
and we let $f = x_{i'} + x_{j'}$. Formula (\ref{eq:congruence}) shows that
$\pi_{i,j}(f^{2^n}) = 0$ if $(i,j) \ne (i',j')$ while $\pi_{i',j'}(f^{2^n}) = 1$. It follows
that $\pi$ in (\ref{eq:pibigdef}) is surjective.  On the other hand, formula (\ref{eq:congruence})
shows that $\pi_{i,j}(f^{2^n}) = 0$ if $f$ has odd constant term $c_f(1)$, and that
otherwise $\pi_{i,j}(f^{2^n})$ depends only on the congruence classes mod $2$
of the linear terms in $f$.  Therefore the same is true of $\pi(f^{2^n})$.  As explained
in the second to last paragraph of the introduction, this
leads to the lower bounds (\ref{eq:lowerbound}) and (\ref{eq:lower}), which completes the proof.

\end{document}